%% file: main.tex
\documentclass{amsart}

\usepackage{mathrsfs}

\input{preamble}

\title[Fourth-Order Moment Operators and Gaussian Measure Singularity]{An Operator-Theoretic Characterization of Gaussian Measure Singularity under Mean Shifts}

\author{Marc Vidal}
\address{Department of Data Analysis, Ghent University}
\email{marc.vidalbadia@ugent.be}

\keywords{Cameron–Martin space, Feldman–Hájek dichotomy, Fisher discriminant, Gaussian measures on Hilbert space, kurtosis, moment operators, spectral representation}
\subjclass[2020]{60B11, 60G15, 46G12, 62H30}

\begin{document}

\begin{abstract}
For two Gaussian random elements on a separable Hilbert space with common covariance operator, the fourth-order moment tensor structure of their mixture admits an operator-valued representation obtained from the purely quadratic component of the class-conditional second moment. We identify the tensor mechanism generating the completely diagonal coefficients of this fourth-order representation after covariance standardization, and show that these coefficients are governed entirely by the coordinatewise Cameron–Martin energy of the mean shift. This yields a necessary and sufficient condition, together with an explicit eigenvalue, for the Fisher discriminant to be an eigenfunction of the induced coordinatewise operator. We further introduce aggregated fourth-order spectral functionals for two complementary constructions, one built from fourth-order moments and the other from a product-of-expectations counterpart, and prove that they are asymptotically equivalent precisely when the underlying Gaussian measures are mutually singular. These results provide a spectral realization of the classical Cameron–Martin criterion through higher-order moment operators, explaining the probabilistic origin of fourth-order spectral quantities previously proposed for Gaussian discrimination and functional data classification and relating them to the ``near-perfect classification’’ regime.
\end{abstract}

\maketitle

\section{Introduction}\label{sec:introduction}
%The separation of Gaussian measures in infinite-dimensional Hilbert paces is one of the fundamental phenomena in the theory of Gaussian processes. 
One of the fundamental phenomena in the theory of Gaussian processes on infinite-dimensional Hilbert spaces is that Gaussian measures are either equivalent or mutually singular, a dichotomy that was established independently by Feldman \cite{Feldman58} and H\'ajek \cite{Hajek58}. Shepp \cite{Shepp64,Shepp66,SheppRN66} subsequently developed finite-dimensional criteria for deciding between the two alternatives, reformulated the dichotomy through Hellinger and Jeffreys functionals, and derived explicit Radon–Nikodym derivatives by separating covariance and mean contributions, leading respectively to what are now recognized as Hilbert–Schmidt conditions on the covariance operators and the classical Cameron–Martin condition on the mean shift.
Statistical implications were investigated by Rao and Varadarajan \cite{RaoVaradarajan63} in the context of discrimination between Gaussian processes. For Gaussian measures with common covariance operator, the Cameron–Martin space (see, e.g.,  \cite{Janson97}) characterizes the admissible directions of mean shifts preserving equivalence. Although Gaussian sample paths almost surely lie outside this space, it provides the natural geometric setting for studying translations and equivalence of Gaussian measures. These developments were substantially generalized by Luki\'c and Beder \cite{Lukic01}, who proposed a reproducing kernel Hilbert space (RKHS) framework for stochastic processes based on dominance and nuclear dominance, establishing the conditions for Gaussian processes to admit sample paths in prescribed RKHSs. More recently, Santoro, Waghmare and Panaretos \cite{Santoro26} showed that kernel covariance embeddings provide an operator-valued representation that recasts arbitrary two-sample problems as discrimination problems between Gaussian measures through the Feldman–Hájek dichotomy.

The separation of measure phenomenon has also re-emerged in functional data analysis through the near-perfect classification paradigm \cite{Delaigle12,Berrendero18}, where the increasing information carried by infinite-dimensional observations allows the misclassification probability to converge to zero asymptotically. Existing methodologies typically exploit the Cameron–Martin geometry indirectly through finite-dimensional approximations and regularization. More recently, fourth-order moment operators have been proposed for binary discrimination \cite{Pena14,Vidal25,Vidal25moment} and later extended to multiclass classification of functional data \cite{Vidal26IFCS}. %Although these works identified empirical associations between the spectrum of fourth-order moment operators and the equivalence–singularity dichotomy, the probabilistic origin of this phenomenon remained unclear. 
Although these works identified empirical spectral patterns suggestive of the equivalence–singularity phenomenon, their probabilistic origin remained unclear. The present work shows that this behaviour is not incidental: the Fisher discriminant is naturally associated with the spectrum of two complementary fourth-order operator constructions whose spectral quantities are completely determined by the Cameron–Martin geometry, thereby providing an operator-theoretic bridge between Gaussian measure theory and functional discrimination. In particular, the fourth-order spectrum provides a probabilistic signature of the near-perfect classification regime.

Higher moments of Banach-valued random variables have been systematically studied by Janson and Kaijser \cite{Janson15} through tensor products. In particular, they showed that the second moment admits a natural representation as a linear operator on Hilbert spaces, while noting that it is unknown whether an analogous construction exists for higher-order moments. Motivated by this observation, Vidal \cite{Vidal25moment} proposed an alternative operator construction based on compositions of rank-one operators induced by second-order tensors, which may be interpreted as contractions of the corresponding tensor moments. This approach generalizes the natural operator representation of the second moment while remaining amenable to spectral analysis. Under Gaussian mixture models, it was shown that the Fisher discriminant is associated with a spectral subspace of these higher-order moment operators, thereby recovering, %through an explicit operator representation, 
a result previously established by Pe\~na, Prieto and Rend\'on \cite{Pena14}. In their pioneering work, the authors introduced a kurtosis operator together with a covariance-dependent transformation satisfying a suitable equivariance condition. Exploiting the rotational invariance of the transformed Gaussian model, they showed that the Fisher discriminant direction can be recovered as an eigendirection. The proof, however, is indirect, relying on invariance arguments rather than on a characterization of the underlying fourth-order structure or of the spectral mechanism giving rise to the discriminant eigendirection. 
While Vidal \cite{Vidal25moment} recovered the same eigenvalue equation through direct operator calculations, it did not explain how this equation arises from the underlying fourth-order tensor structure. The present work addresses this question by identifying the tensor mechanism underlying the corresponding coordinatewise discriminant eigenstructure, thereby providing the basis for the spectral analysis developed below.
%While Vidal \cite{Vidal25moment} recovered the same eigenvalue equation through direct operator calculations, it did not explain how this eigenvalue equation arises from the underlying fourth-order tensor structure. The present work resolves this question by showing that the discriminant eigenvalue equation reflects the underlying fourth-order tensor structure represented by the contracted operator. This formulation not only recovers the previous result, but also identifies the underlying tensor structure responsible for the discriminant eigendirection, thereby providing the foundation for its interpretation through the Cameron–Martin geometry and the Feldman–Hájek dichotomy.

\subsection{Contributions}
The main contributions of this paper are threefold.
First, we establish an explicit operator-theoretic connection between fourth-order moment operators and the Cameron--Martin geometry by characterizing the spectrum of the induced coordinatewise fourth-order operator in terms of coordinatewise Cameron--Martin norms (see Lemma~\ref{lem:diagonal-coefficients} and Theorem~\ref{T1}).
Second, we introduce aggregated fourth-order spectral quantities associated with two complementary fourth-order operator constructions and prove that their discriminative spectra are asymptotically equivalent in the singular regime (Theorem~\ref{thm:asymptotic-equivalence}).
Finally, we show that the divergence of these aggregated spectral quantities is equivalent to the Feldman--H\'ajek singularity criterion under a common covariance operator, thereby providing an operator-theoretic realization of the Cameron--Martin characterization of Gaussian measure singularity (Theorem~\ref{T2}).
%These results explain the probabilistic origin of fourth-order spectral criteria underlying near-perfect classification.

\section{Preliminaries}
\subsection{Gaussian mixtures in Hilbert spaces}
Let
$H:= L^2([0,1])$,
equipped with the usual inner product
$\langle\cdot,\cdot\rangle_H$
and norm
$\|\cdot\|_H$.
We consider observations arising from a mixture of two Gaussian random elements
$X_1,X_2$ taking values in $H$ and satisfying
$X_\ell\sim\mathbb G(m_\ell,\mathsf K)$,
$\ell=1,2$,
where both populations share the same covariance operator
$\mathsf K:H\rightarrow H$,
while their mean functions
$m_1,m_2\in H$
may differ. An observation from the mixture is generated according to
\[
X=
\begin{cases}
X_1,&\text{with probability }\alpha,\\
X_2,&\text{with probability }1-\alpha,
\end{cases} 
\qquad
0<\alpha<1.
\]
The common covariance operator is assumed to be induced by a continuous covariance kernel
$\mathfrak K(s,t)$ through
\[
(\mathsf Kf)(t)
=
\int_0^1 \mathfrak K(s,t)f(s)\,ds,
\]
and is therefore compact, self-adjoint, positive and trace class \cite[see Theorems 5.2 and 5.4]{Janson15}.
By the spectral theorem, there exists an orthonormal basis
$\{\gamma_j\}_{j\ge1}$ of $H$
and a non-increasing sequence of non-negative eigenvalues
$\lambda_1\ge\lambda_2\ge\cdots\ge0$
such that
$\mathsf K\gamma_j=\lambda_j\gamma_j$
for every $j$. 
Consequently, each Gaussian component admits the Karhunen--Loève expansion
\[
X_\ell
=
m_\ell
+
\sum_{j=1}^{\infty}
\xi_{\ell j}\gamma_j,
\qquad \ell =1,2,
\]
where
$\xi_{\ell j}=\langle X_\ell-m_\ell,\gamma_j\rangle_H$
are mutually independent Gaussian random variables satisfying
$\xi_{\ell j}\sim N(0,\lambda_j)$ \cite[see, e.g., Sect.~7.3]{HsingEubank15}.

Throughout the paper we assume that $\lambda_j>0$ for every $j$, so that
$\mathsf K$ is injective.
We write the mean difference as
\[
m_\Delta
=
m_2-m_1
=
\sum_{j=1}^{\infty}\nu_j\gamma_j,
\]
where $\nu_j=\langle m_\Delta,\gamma_j\rangle_H$.
The Fisher discriminant criterion for the mixture is defined by
\[
J(\beta)
=
\frac{\alpha(1-\alpha)\langle\beta,m_\Delta\rangle_H^2}
{\langle\beta,\mathsf K\beta\rangle_H}.
\]

Assuming that $\sum_{j=1}^{\infty}\frac{\nu_j^2}{\lambda_j^2}<\infty,$ the Moore--Penrose inverse $\mathsf K^\dagger$ is well defined at
$m_\Delta$ \cite[Ch.~3]{HsingEubank15}, and the Fisher criterion is
maximized, up to a non-zero multiplicative constant, by
\[
\beta
=
\mathsf K^\dagger m_\Delta
=
\sum_{j=1}^{\infty}
\frac{\nu_j}{\lambda_j}\gamma_j.
\]

The above assumption is stronger than the condition characterizing
equivalence of Gaussian measures under a common covariance operator,
which is introduced later through the Cameron--Martin space.
%Consequently, the infinite-dimensional Fisher discriminant is defined
%on a strictly smaller class of mean shifts than the corresponding
%Gaussian measure equivalence criterion.
Consequently, the Fisher discriminant may fail to exist as an element of $H$, 
even when the corresponding Gaussian measures are equivalent.
For this reason, the subsequent analysis is carried out through
finite-dimensional Karhunen--Loève truncations, for which the Fisher
discriminant is well defined at every truncation level.

\subsection{Higher-order moment representations}
For $f,g\in H$, the tensor product $f\otimes g\in H\otimes H$ is defined by $(f\otimes g)(h)=\langle h,g\rangle_H f,$ for every $h\in H$, under the canonical identification of $H\otimes H$ with the space of Hilbert--Schmidt operators on $H$. In particular, $f\otimes g$ is a rank-one operator. 

The second-order moment of a centred random element $X$ is therefore naturally represented by $\mathsf K=\mathbb E(X\otimes X)$, which coincides with its covariance operator. Higher-order moments can be analogously defined through tensor products. Let $H^{\otimes p}=H\otimes\cdots\otimes H$ denote the $p$-fold Hilbert tensor product of $H$. The $p$th tensor moment is defined by $\mathbb E(X^{\otimes p})\in H^{\otimes p}$, where $X^{\otimes p}=X\otimes\cdots\otimes X$ is the $p$th tensor power of $X$. When $p=2r$ is even, the tensor space $H^{\otimes 2r}$ admits an equivalent representation in terms of Hilbert--Schmidt operators. Indeed, since $\mathcal L_2(H)\cong H^{\otimes 2}$, iterating this canonical identification yields $\mathcal L_2(H)^{\otimes r}\cong H^{\otimes 2r}$. Hence, under $(H\otimes H)^{\otimes r}\cong H^{\otimes 2r}$, the tensor $(X\otimes X)^{\otimes r}$ is identified with $X^{\otimes 2r}$. Consequently, the corresponding tensor moments coincide under the same identification. By contrast, odd-order moments contain an unpaired copy of $X$ and therefore do not admit a natural representation as operators on $H$. To address this issue, Vidal \cite{Vidal25moment} proposed representing even-order moments through compositions of the rank-one operator $X\otimes X$. Even-order moment operators are thus obtained by repeated compositions of $X\otimes X$, whereas odd orders are represented through fractional powers, yielding operator representations consistent with the corresponding odd-order tensor moments.

The present work is solely concerned with fourth-order moments. We combine both viewpoints by first constructing fourth-order tensor moments in $H^{\otimes4}$ and subsequently deriving associated operator representations. Although every fourth-order operator admits its own spectral decomposition, throughout the paper all tensor expansions are expressed with respect to the Karhunen–Loève basis. This covariance-induced representation allows every fourth-order construction to be expressed in terms of the Karhunen–Loève coefficients, reducing the analysis to Gaussian moment identities. Moreover, it provides a natural finite-dimensional approximation that regularizes the inverse covariance operator required for standardization, thereby avoiding the need for an independent spectral decomposition of the fourth-order operators introduced below.

Let $m=\alpha m_1+(1-\alpha)m_2$ and $Y=X-m$. After centring by the mixture mean, write
$a_1=-(1-\alpha)$, $a_2=\alpha$, $\pi_1=\alpha$, and $\pi_2=1-\alpha$.
For each class $\ell\in\{1,2\}$, define
\[
Y_\ell
=
\sum_{j=1}^{\infty}
(\xi_{\ell j}+a_\ell\nu_j)\gamma_j,
\]
where the coefficients $\xi_{\ell j}$ are independent Gaussian random
variables with mean zero and variance $\lambda_j$. Then
\[
Y=
\begin{cases}
Y_1,&\text{with probability }\pi_1,\\
Y_2,&\text{with probability }\pi_2.
\end{cases}
\]

The covariance operator of the centred mixture $Y$ is
\[
\begin{aligned}
\mathsf K_Y(u)
&=
\sum_{\ell=1}^{2}
\pi_\ell\,
\mathbb E
\bigl[
\langle Y_\ell,u\rangle_H\,Y_\ell
\bigr] \\
&=
\mathsf Ku
+
\alpha(1-\alpha)
\langle u,m_\Delta\rangle_H\,m_\Delta,
\qquad u\in H.
\end{aligned}
\]
Hence $\mathsf K_Y=\mathsf K+\alpha(1-\alpha)m_\Delta\otimes m_\Delta$.
Each Gaussian component has finite moments of every order (see, e.g., Fernique's theorem), and the mixture differs only by deterministic mean shifts. Therefore, $\mathbb E\|Y\|_H^4<\infty$. Moreover, $\|Y^{\otimes4}\|_{H^{\otimes4}}=\|Y\|_H^4$, so that $Y^{\otimes4}$ is Bochner integrable in $H^{\otimes4}$. The coefficient expansions below are obtained by applying the continuous coordinate functionals associated with the tensor-product Karhunen--Loève basis to the Bochner integral defining $\mathsf M_Y^{(4)}$, and then expanding the Karhunen--Loève representation of $Y$. Accordingly, the centred fourth-order moment is defined by
\[
\mathsf M_Y^{(4)}
=
\mathbb E\bigl(Y^{\otimes4}\bigr)
=
\sum_{\ell=1}^{2}\pi_\ell
\mathbb E
\left[
\left(
\sum_{j=1}^{\infty}
(\xi_{\ell j}+a_\ell\nu_j)\gamma_j
\right)^{\otimes4}
\right].
\]
Equivalently,
\[
\mathsf M_Y^{(4)}
=
\sum_{j,k,m,n\ge1}
M_{jkmn}
\,
\gamma_j\otimes\gamma_k\otimes\gamma_m\otimes\gamma_n,
\]
where
\[
M_{jkmn}
=
\sum_{\ell=1}^{2}\pi_\ell
\mathbb E
\left[
(\xi_{\ell j}+a_\ell\nu_j)
(\xi_{\ell k}+a_\ell\nu_k)
(\xi_{\ell m}+a_\ell\nu_m)
(\xi_{\ell n}+a_\ell\nu_n)
\right].
\]

Expanding the product gives
\[
\begin{aligned}
M_{jkmn}
=
\sum_{\ell=1}^{2}\pi_\ell
\Bigl\{&
\mathbb E
(\xi_{\ell j}\xi_{\ell k}\xi_{\ell m}\xi_{\ell n})
\\
&+
a_\ell\nu_j\,\mathbb E(\xi_{\ell k}\xi_{\ell m}\xi_{\ell n})
+
a_\ell\nu_k\,\mathbb E(\xi_{\ell j}\xi_{\ell m}\xi_{\ell n})
\\
&+
a_\ell\nu_m\,\mathbb E(\xi_{\ell j}\xi_{\ell k}\xi_{\ell n})
+
a_\ell\nu_n\,\mathbb E(\xi_{\ell j}\xi_{\ell k}\xi_{\ell m})
\\
&+
a_\ell^2\nu_j\nu_k\,\mathbb E(\xi_{\ell m}\xi_{\ell n})
+
a_\ell^2\nu_j\nu_m\,\mathbb E(\xi_{\ell k}\xi_{\ell n})
\\
&+
a_\ell^2\nu_j\nu_n\,\mathbb E(\xi_{\ell k}\xi_{\ell m})
+
a_\ell^2\nu_k\nu_m\,\mathbb E(\xi_{\ell j}\xi_{\ell n})
\\
&+
a_\ell^2\nu_k\nu_n\,\mathbb E(\xi_{\ell j}\xi_{\ell m})
+
a_\ell^2\nu_m\nu_n\,\mathbb E(\xi_{\ell j}\xi_{\ell k})
\\
&+
a_\ell^3\nu_j\nu_k\nu_m\,\mathbb E(\xi_{\ell n})
+
a_\ell^3\nu_j\nu_k\nu_n\,\mathbb E(\xi_{\ell m})
\\
&+
a_\ell^3\nu_j\nu_m\nu_n\,\mathbb E(\xi_{\ell k})
+
a_\ell^3\nu_k\nu_m\nu_n\,\mathbb E(\xi_{\ell j})
\\
&+
a_\ell^4\nu_j\nu_k\nu_m\nu_n
\Bigr\}.
\end{aligned}
\]

Since the Gaussian coefficients are centred,
$\mathbb E(\xi_{\ell j})=0$ and
$\mathbb E(\xi_{\ell j}\xi_{\ell k}\xi_{\ell r})=0$,
all terms containing either one or three Gaussian factors vanish. Therefore,
\[
\begin{aligned}
M_{jkmn}
=
\sum_{\ell=1}^{2}\pi_\ell
\Bigl\{&
\mathbb E
(\xi_{\ell j}\xi_{\ell k}\xi_{\ell m}\xi_{\ell n})
\\
&+
a_\ell^2\Bigl[
\nu_j\nu_k\,\mathbb E(\xi_{\ell m}\xi_{\ell n})
+\nu_j\nu_m\,\mathbb E(\xi_{\ell k}\xi_{\ell n})
+\nu_j\nu_n\,\mathbb E(\xi_{\ell k}\xi_{\ell m})
\\
&\hspace{2.2cm}
+\nu_k\nu_m\,\mathbb E(\xi_{\ell j}\xi_{\ell n})
+\nu_k\nu_n\,\mathbb E(\xi_{\ell j}\xi_{\ell m})
+\nu_m\nu_n\,\mathbb E(\xi_{\ell j}\xi_{\ell k})
\Bigr]
\\
&+
a_\ell^4
\nu_j\nu_k\nu_m\nu_n
\Bigr\}.
\end{aligned}
\]
Under the canonical identification
$
H^{\otimes4}
\cong
\mathcal L_2(H)\otimes\mathcal L_2(H),
$
the fourth-order tensor moment $\mathsf M_Y^{(4)}$ admits an equivalent operator representation. Since the canonical identification is an isometric isomorphism (hence a bounded linear operator), it commutes with Bochner integration. Thus, a fourth-order tensor may be represented as the second moment of a random element taking values in $H^{\otimes2}\cong\mathcal L_2(H)$, where the second moment is taken in the Hilbert--Schmidt space.

For each class $\ell$, decompose the random rank-one operator as
\[
Y_\ell\otimes Y_\ell
=
\mathsf V_\ell^{(2)}
+
\mathsf R_\ell,
\]
where
\[
\mathsf V_\ell^{(2)}
=
\sum_{j,k\ge1}
\bigl(
\xi_{\ell j}\xi_{\ell k}
+
a_\ell^2\nu_j\nu_k
\bigr)
(\gamma_j\otimes\gamma_k),
\quad
\mathsf R_\ell
=
\sum_{j,k\ge1}
a_\ell
\bigl(
\xi_{\ell j}\nu_k
+
\nu_j\xi_{\ell k}
\bigr)
(\gamma_j\otimes\gamma_k).
\]

Define
\[
\mathsf V^{(4)}
=
\sum_{\ell=1}^{2}
\pi_\ell
\mathbb E
\left(
\mathsf V_\ell^{(2)}
\otimes
\mathsf V_\ell^{(2)}
\right).
\]
Then
\[
\mathsf M_Y^{(4)}
=
\mathsf V^{(4)}
+
\sum_{\ell=1}^{2}\pi_\ell
\mathbb E\!\left(
\mathsf V_\ell^{(2)}\otimes\mathsf R_\ell
\right)
+
\sum_{\ell=1}^{2}\pi_\ell
\mathbb E\!\left(
\mathsf R_\ell\otimes\mathsf V_\ell^{(2)}
\right)
+
\sum_{\ell=1}^{2}\pi_\ell
\mathbb E\!\left(
\mathsf R_\ell\otimes\mathsf R_\ell
\right).
\]

The operator $\mathsf V^{(4)}$ therefore represents the component of the exact fourth-order moment associated with the purely Gaussian second-order structure and the deterministic between-class contribution, whereas the remaining terms collect the interactions generated by the linear mean-shift component. Thus, $\mathsf V^{(4)}$ is not the complete fourth-order moment tensor of the mixture, but a structured fourth-order component obtained by excluding the interaction terms generated by the linear mean shift. This is precisely the construction underlying the moment operators studied in Vidal \cite{Vidal25moment}; it separates the covariance and between-class contributions while retaining a tractable operator-valued fourth-order representation. The subsequent analysis is therefore devoted to $\mathsf V^{(4)}$, whose structure admits an explicit spectral characterization.

Although $\mathsf V^{(4)}$ is defined as a second moment in $H^{\otimes2}$, it represents a fourth-order tensor on the original Hilbert space $H$. Expanding $\mathsf V^{(4)}$ with respect to the tensor-product Karhunen–Loève basis gives
\[
\mathsf V^{(4)}
=
\sum_{j,k,m,n\ge1}
V_{jkmn}\,
\gamma_j\otimes\gamma_k\otimes\gamma_m\otimes\gamma_n,
\]
where
\[
V_{jkmn}
=
\sum_{\ell=1}^{2}
\pi_\ell\,
\mathbb E\!\left[
(\xi_{\ell j}\xi_{\ell k}+a_\ell^2\nu_j\nu_k)
(\xi_{\ell m}\xi_{\ell n}+a_\ell^2\nu_m\nu_n)
\right].
\]
Since $\mathsf V^{(4)}$ excludes the interaction component $\mathsf R_\ell$, expanding the product yields only four terms
\[
\begin{aligned}
V_{jkmn}
={}&
\sum_{\ell=1}^{2}\pi_\ell
\Bigl\{
\mathbb E
(\xi_{\ell j}\xi_{\ell k}\xi_{\ell m}\xi_{\ell n})
\\
&+
a_\ell^2
\nu_j\nu_k\,
\mathbb E(\xi_{\ell m}\xi_{\ell n})
+
a_\ell^2
\nu_m\nu_n\,
\mathbb E(\xi_{\ell j}\xi_{\ell k})
\\
&+
a_\ell^4
\nu_j\nu_k\nu_m\nu_n
\Bigr\}.
\end{aligned}
\]
rather than the sixteen terms arising in the expansion of the exact fourth-order tensor moment. Using Isserlis' formula,
\[
\begin{aligned}
\mathbb E
(\xi_{\ell j}\xi_{\ell k}\xi_{\ell m}\xi_{\ell n})
={}&
\mathbb E(\xi_{\ell j}\xi_{\ell k})
\mathbb E(\xi_{\ell m}\xi_{\ell n})
\\
&+
\mathbb E(\xi_{\ell j}\xi_{\ell m})
\mathbb E(\xi_{\ell k}\xi_{\ell n})
\\
&+
\mathbb E(\xi_{\ell j}\xi_{\ell n})
\mathbb E(\xi_{\ell k}\xi_{\ell m}).
\end{aligned}
\]
Since $\mathbb E(\xi_{\ell j}\xi_{\ell k})=\lambda_j\delta_{jk}$, we obtain
\[
\begin{aligned}
\mathbb E
(\xi_{\ell j}\xi_{\ell k}\xi_{\ell m}\xi_{\ell n})
={}&
\lambda_j\lambda_m\delta_{jk}\delta_{mn}
+
\lambda_j\lambda_k\delta_{jm}\delta_{kn}
\\
&+
\lambda_j\lambda_k\delta_{jn}\delta_{km}.
\end{aligned}
\]
Consequently,
\[
\begin{aligned}
V_{jkmn}
=
\sum_{\ell=1}^{2}\pi_\ell
\Bigl\{&
\lambda_j\lambda_m\delta_{jk}\delta_{mn}
+
\lambda_j\lambda_k\delta_{jm}\delta_{kn}
+
\lambda_j\lambda_k\delta_{jn}\delta_{km}
\\
&+
a_\ell^2
\left[
\nu_j\nu_k\lambda_m\delta_{mn}
+
\nu_m\nu_n\lambda_j\delta_{jk}
\right]
\\
&+
a_\ell^4\nu_j\nu_k\nu_m\nu_n
\Bigr\}.
\end{aligned}
\]
Using
\begin{align}
\sum_{\ell=1}^{2}\pi_\ell a_\ell^2
&=\alpha(1-\alpha)=\rho, \label{eq1}\\
\sum_{\ell=1}^{2}\pi_\ell a_\ell^4
&=\alpha(1-\alpha)^4+(1-\alpha)\alpha^4 \notag\\
&=\alpha(1-\alpha)\bigl[(1-\alpha)^3+\alpha^3\bigr]
=\rho(1-3\rho), \label{eq2}
\end{align}
where $\rho=\alpha(1-\alpha)$,
the coefficients reduce to
\[
\begin{aligned}
V_{jkmn}
={}&
\lambda_j\lambda_m\delta_{jk}\delta_{mn}
+
\lambda_j\lambda_k\delta_{jm}\delta_{kn}
+
\lambda_j\lambda_k\delta_{jn}\delta_{km}
\\
&+
\rho
\Bigl[
\nu_j\nu_k\lambda_m\delta_{mn}
+
\nu_m\nu_n\lambda_j\delta_{jk}
+
(1-3\rho)
\nu_j\nu_k\nu_m\nu_n
\Bigr].
\end{aligned}
\]

Neither $\mathsf M_Y^{(4)}$ nor $\mathsf V^{(4)}$ defines an operator acting directly on $H$. To obtain an operator on the original Hilbert space, we instead define the contracted fourth-order moment operator directly by
\[
\mathsf K_{4,Y}^{\mathrm{ctr}}
:=
\mathbb E
\left[
\|Y\|_H^2
(Y\otimes Y)
\right].
\]
Since $\|Y\otimes Y\|_{\mathcal L_2(H)}=\|Y\|_H^2$, it follows that
$\bigl\|\|Y\|_H^2(Y\otimes Y)\bigr\|_{\mathcal L_2(H)}=\|Y\|_H^4$.
As Gaussian mixtures possess finite fourth moments, the above expectation is well defined as a Bochner integral with values in the Hilbert–Schmidt space.

Formally, this construction corresponds to contracting the first two tensor factors of the fourth-order tensor. Indeed, on elementary tensors,
\[
f_1\otimes f_2\otimes f_3\otimes f_4
\longmapsto
\langle f_1,f_2\rangle_H\,
(f_3\otimes f_4),
\]
which yields the representation
$
\|Y\|_H^2
(Y\otimes Y)
$
when applied to the elementary tensor
$
Y\otimes Y\otimes Y\otimes Y.
$
Expanding the contraction with respect to the covariance-induced
Karhunen--Loève basis yields
\[
\mathsf K_{4,Y}^{\mathrm{ctr}}
=
\sum_{j,k\ge1}
K^{(4)}_{jk}
\,
\gamma_j\otimes\gamma_k,
\]
where
\[
K^{(4)}_{jk}
=
\sum_{\ell=1}^{2}
\pi_\ell
\,
\mathbb E
\!\left[
\|Y_\ell\|_H^2
(\xi_{\ell j}+a_\ell\nu_j)
(\xi_{\ell k}+a_\ell\nu_k)
\right].
\]

Since $\|Y_\ell\|_H^2=\sum_{r=1}^{\infty}(\xi_{\ell r}+a_\ell\nu_r)^2$, the series of expectations is absolutely summable by Cauchy--Schwarz together with the trace-class property of $\mathsf K$ and the assumption $m_\Delta\in H$. Hence Fubini's theorem for series justifies the interchange of the infinite sum and expectation. Thus,
\[
K_{jk}^{(4)}
=
\sum_{\ell=1}^{2}\pi_\ell
\sum_{r=1}^{\infty}
\mathbb E
\left[
(\xi_{\ell r}+a_\ell\nu_r)^2
(\xi_{\ell j}+a_\ell\nu_j)
(\xi_{\ell k}+a_\ell\nu_k)
\right].
\]
Expanding the product gives
\[
\begin{aligned}
&
\mathbb E
\left[
(\xi_{\ell r}+a_\ell\nu_r)^2
(\xi_{\ell j}+a_\ell\nu_j)
(\xi_{\ell k}+a_\ell\nu_k)
\right]
\\
&=
\mathbb E
(\xi_{\ell r}^2\xi_{\ell j}\xi_{\ell k})
+
a_\ell\nu_j
\mathbb E(\xi_{\ell r}^2\xi_{\ell k})
+
a_\ell\nu_k
\mathbb E(\xi_{\ell r}^2\xi_{\ell j})
\\
&\quad
+
a_\ell^2\nu_j\nu_k
\mathbb E(\xi_{\ell r}^2)
+
2a_\ell\nu_r
\mathbb E(\xi_{\ell r}\xi_{\ell j}\xi_{\ell k})
\\
&\quad
+
2a_\ell^2\nu_r\nu_j
\mathbb E(\xi_{\ell r}\xi_{\ell k})
+
2a_\ell^2\nu_r\nu_k
\mathbb E(\xi_{\ell r}\xi_{\ell j})
\\
&\quad
+
2a_\ell^3\nu_r\nu_j\nu_k
\mathbb E(\xi_{\ell r})
+
a_\ell^2\nu_r^2
\mathbb E(\xi_{\ell j}\xi_{\ell k})
\\
&\quad
+
a_\ell^3\nu_r^2\nu_j
\mathbb E(\xi_{\ell k})
+
a_\ell^3\nu_r^2\nu_k
\mathbb E(\xi_{\ell j})
+
a_\ell^4\nu_r^2\nu_j\nu_k.
\end{aligned}
\]
Since the Gaussian coefficients are centred, all terms containing an odd
number of Gaussian factors vanish. Therefore,
\[
\begin{aligned}
&
\mathbb E
\left[
(\xi_{\ell r}+a_\ell\nu_r)^2
(\xi_{\ell j}+a_\ell\nu_j)
(\xi_{\ell k}+a_\ell\nu_k)
\right]
\\
&=
\mathbb E
(\xi_{\ell r}^2\xi_{\ell j}\xi_{\ell k})
+
a_\ell^2\nu_j\nu_k
\mathbb E(\xi_{\ell r}^2)
\\
&\quad
+
2a_\ell^2\nu_r\nu_j
\mathbb E(\xi_{\ell r}\xi_{\ell k})
+
2a_\ell^2\nu_r\nu_k
\mathbb E(\xi_{\ell r}\xi_{\ell j})
\\
&\quad
+
a_\ell^2\nu_r^2
\mathbb E(\xi_{\ell j}\xi_{\ell k})
+
a_\ell^4\nu_r^2\nu_j\nu_k.
\end{aligned}
\]
Using
$
\mathbb E(\xi_{\ell j}\xi_{\ell k})
=
\lambda_j\delta_{jk}
$
and Isserlis' formula,
\[
\mathbb E
(\xi_{\ell r}^2\xi_{\ell j}\xi_{\ell k})
=
\lambda_r\lambda_j\delta_{jk}
+
2\lambda_r^2\delta_{rj}\delta_{rk},
\]
we obtain
\[
\begin{aligned}
K_{jk}^{(4)}
={}&
\left[
\operatorname{tr}(\mathsf K)\lambda_j
+
2\lambda_j^2
\right]\delta_{jk}
\\
&+
\left(
\sum_{\ell=1}^{2}\pi_\ell a_\ell^2
\right)
\Bigl[
\operatorname{tr}(\mathsf K)\nu_j\nu_k
+
2(\lambda_j+\lambda_k)\nu_j\nu_k
+
\|m_\Delta\|_H^2\lambda_j\delta_{jk}
\Bigr]
\\
&+
\left(
\sum_{\ell=1}^{2}\pi_\ell a_\ell^4
\right)
\|m_\Delta\|_H^2\nu_j\nu_k.
\end{aligned}
\]

Using \eqref{eq1} and \eqref{eq2}, these coefficients simplify to
\[
\begin{aligned}
K_{jk}^{(4)}
={}&
\left[
\operatorname{tr}(\mathsf K)\lambda_j
+
2\lambda_j^2
\right]\delta_{jk}
\\
&+
\rho
\Bigl[
\operatorname{tr}(\mathsf K)\nu_j\nu_k
+
2(\lambda_j+\lambda_k)\nu_j\nu_k
+
\|m_\Delta\|_H^2\lambda_j\delta_{jk}
\Bigr]
\\
&+
\rho(1-3\rho)
\|m_\Delta\|_H^2\nu_j\nu_k.
\end{aligned}
\]
Consequently, $\mathsf K_{4,Y}^{\mathrm{ctr}}=\sum_{j,k\ge1}K_{jk}^{(4)}(\gamma_j\otimes\gamma_k)$.
In general, the coefficients \(K_{jk}^{(4)}\) do not vanish when \(j\neq k\).  Hence, unlike the covariance operator, the contracted fourth-order moment operator is not diagonal with respect to the Karhunen–Loève basis. Consequently, its eigenfunctions are not given by the covariance eigenfunctions, and its spectral decomposition cannot be obtained directly from the coefficient expansion above. We therefore base the subsequent analysis on the covariance-based fourth-order representation $\mathsf V^{(4)}$ introduced above. After standardization, its spectral structure admits an explicit characterization, which, as shown in the next section, is naturally expressed in terms of the Cameron–Martin geometry.

\section{Main results}

\subsection{Standardization and contracted fourth-order moments}
As discussed in the previous section, the infinite-dimensional Fisher
discriminant is defined only under a stronger assumption than the
Gaussian measure equivalence criterion. Throughout the remainder of the
paper, we therefore work with the finite-dimensional Fisher
discriminants obtained from Karhunen--Loève truncations.

The finite-dimensional truncation also provides a well-defined
standardization of the Gaussian random element. In infinite-dimensional
Hilbert spaces, the covariance operator $\mathsf K$ is compact, so that
$\mathsf K^{-1/2}$ is an unbounded operator. Moreover, a Gaussian random
element belongs to the Cameron--Martin space with probability zero
\cite{Lukic01}. Consequently,
$
\mathsf K^{-1/2}(X_\ell-m_\ell)
$
does not define an $H$-valued random element. Instead, standardization
is performed on each finite-dimensional Karhunen--Loève subspace, where
the projected covariance operator is invertible. Unlike regularization
methods that modify the underlying metric, spectral truncation preserves
the covariance geometry on each finite-dimensional subspace while
allowing the infinite-dimensional behaviour to emerge as the truncation
level increases.

Let
$
H_q=\operatorname{span}\{\gamma_1,\ldots,\gamma_q\},
$
and let
$
\Pi_q
$
denote the orthogonal projection onto $H_q$. By the assumption that $\lambda_j>0$ for every $j$, the projected covariance operator
$
\mathsf K_q=\Pi_q\mathsf K\Pi_q
$
is positive definite on $H_q$, so that
$
\mathsf K_q^{-1/2}
$
is well defined. 

Writing
\[
m_{\Delta,q}=\Pi_qm_\Delta,
\qquad
\widetilde m_{\Delta,q}
=
\mathsf K_q^{-1/2}m_{\Delta,q}
=
\sum_{j=1}^q
\widetilde\nu_j\gamma_j,
\qquad
\widetilde\nu_j
=
\nu_j/\sqrt{\lambda_j}.
\]
define, for each class $\ell=1,2$, the standardized centred Gaussian random element
\[
Z_{q,\ell}
=
\mathsf K_q^{-1/2}\Pi_q(X_\ell-m_\ell)
=
\sum_{j=1}^{q}\zeta_{\ell j}\gamma_j,
\]
where
\(
\zeta_{\ell j}=\xi_{\ell j}/\sqrt{\lambda_j}
\)
are independent standard Gaussian random variables. Since
\[
\mathsf K_q^{-1/2}\mathsf K_q\mathsf K_q^{-1/2}
=
\sum_{j=1}^{q}\gamma_j\otimes\gamma_j
=
\mathsf I_{H_q},
\]
it follows that $Z_{q,\ell}\sim\mathbb G(0,\mathsf I_{H_q})$, $\ell=1,2$, where $\mathsf I_{H_q}$ denotes the identity operator on $H_q$.
Define
\[
\widetilde Y_{q,\ell}
=
Z_{q,\ell}
+
a_\ell\widetilde m_{\Delta,q},
\qquad
\ell=1,2.
\]
Thus,
\[
\widetilde Y_q=
\begin{cases}
\widetilde Y_{q,1},
&\text{with probability }\alpha,\\
\widetilde Y_{q,2},
&\text{with probability }1-\alpha.
\end{cases}
\]
%Since $Z_{q,1}$ and $Z_{q,2}$ have the same distribution, we write $Z_q$ whenever the class index is immaterial.

The finite-dimensional representation preserves the covariance-induced geometry introduced in the previous section while allowing the second- and fourth-order moment operators to be studied through explicit spectral calculations. The following proposition shows that, after standardization, both the covariance operator and the contracted fourth-order moment operator reduce to rank-one perturbations of the identity.

\begin{lemma} \label{lemma1}
Let $\mathsf K_{2,q}$ and $\mathsf K_{4,q}^{\mathrm{ctr}}$ denote,
respectively, the covariance operator and the contracted fourth-order
moment operator of the standardized mixture.
Then
\[
\mathsf K_{2,q}
=
\mathsf I_{H_q}
+
\rho\,
\widetilde m_{\Delta,q}\otimes
\widetilde m_{\Delta,q},
\]
and
\[
\mathsf K_{4,q}^{\mathrm{ctr}}
=
c_{0,q}\mathsf I_{H_q}
+
c_{1,q}\,
\widetilde m_{\Delta,q}\otimes
\widetilde m_{\Delta,q},
\]
where
\[
 c_{0,q}=q+2+\rho\|\widetilde m_{\Delta,q}\|_{H_q}^{2},
 \qquad
 c_{1,q}=\rho(q+4)+\rho(1-3\rho)\|\widetilde m_{\Delta,q}\|_{H_q}^{2}.
\]
\end{lemma}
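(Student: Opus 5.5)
The plan is to observe that the standardized mixture $\widetilde Y_q$ has exactly the structure of the centred mixture $Y$ from the previous section. I would then reuse the coefficient computations already carried out for $\mathsf K_Y$ and $\mathsf K_{4,Y}^{\mathrm{ctr}}$, applying them with the appropriate substitutions instead of redoing any Gaussian moment calculus.

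First I would check that $\widetilde Y_q$ is the image of $Y$ under the linear map $\mathsf K_q^{-1/2}\Pi_q$. Indeed, $\widetilde Y_{q,\ell}=\mathsf K_q^{-1/2}\Pi_q Y_\ell=\sum_{j\le q}(\zeta_{\ell j}+a_\ell\widetilde\nu_j)\gamma_j$, because $\mathsf K_q^{-1/2}\Pi_q$ sends $\xi_{\ell j}\gamma_j$ to $\zeta_{\ell j}\gamma_j$ and $\nu_j\gamma_j$ to $\widetilde\nu_j\gamma_j$. In particular $\widetilde Y_q$ is centred, since $\pi_1a_1+\pi_2a_2=-\alpha(1-\alpha)+(1-\alpha)\alpha=0$, so its second moment is its covariance. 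Comparing with $Y_\ell=\sum_j(\xi_{\ell j}+a_\ell\nu_j)\gamma_j$, the standardized model is the original one under the substitutions
\[
\lambda_j\mapsto 1,\qquad \nu_j\mapsto\widetilde\nu_j,\qquad j\in\{1,\dots,q\},
\]
with all indices restricted to $\{1,\ldots,q\}$. This turns $\operatorname{tr}(\mathsf K)$ into $\sum_{j\le q}1=q$ and $\|m_\Delta\|_H^2$ into $\|\widetilde m_{\Delta,q}\|_{H_q}^2$. The derivations of $\mathsf K_Y$ and of $K^{(4)}_{jk}$ used only independence, centredness, Isserlis' formula and the mixing identities \eqref{eq1}--\eqref{eq2}, all of which hold verbatim here. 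The series are now finite, so the Fubini step is trivial.

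For the covariance, the substitution in $\mathsf K_Y=\mathsf K+\rho\, m_\Delta\otimes m_\Delta$ gives immediately $\mathsf K_{2,q}=\mathsf I_{H_q}+\rho\,\widetilde m_{\Delta,q}\otimes\widetilde m_{\Delta,q}$. For the contracted operator, the coefficient formula becomes
\[
K^{(4)}_{jk,q}=(q+2)\delta_{jk}+\rho\bigl[q\widetilde\nu_j\widetilde\nu_k+4\widetilde\nu_j\widetilde\nu_k+\|\widetilde m_{\Delta,q}\|^2_{H_q}\delta_{jk}\bigr]+\rho(1-3\rho)\|\widetilde m_{\Delta,q}\|^2_{H_q}\widetilde\nu_j\widetilde\nu_k .
\]
Here the term $2(\lambda_j+\lambda_k)$ has become $4$. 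Grouping the $\delta_{jk}$ terms and the $\widetilde\nu_j\widetilde\nu_k$ terms, and using $\sum_{j\le q}\gamma_j\otimes\gamma_j=\mathsf I_{H_q}$ and $\sum_{j,k\le q}\widetilde\nu_j\widetilde\nu_k\,\gamma_j\otimes\gamma_k=\widetilde m_{\Delta,q}\otimes\widetilde m_{\Delta,q}$, yields exactly $c_{0,q}$ and $c_{1,q}$.

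The only delicate point is the first step: making sure the contraction $\|\widetilde Y_q\|^2_{H_q}$ is taken in $H_q$ after standardization, rather than being the standardization of $\|Y\|^2_H$. With that convention the substitution argument applies termwise. If one prefers to avoid relying on the substitution, I would recompute $\mathbb E[(\zeta_r+a\widetilde\nu_r)^2(\zeta_j+a\widetilde\nu_j)(\zeta_k+a\widetilde\nu_k)]$ directly. This uses $\mathbb E\zeta_r^2\zeta_j\zeta_k=\delta_{jk}+2\delta_{rj}\delta_{rk}$ and then sums over $r\le q$, which is a routine check.
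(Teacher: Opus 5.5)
Your proposal is correct and takes essentially the same route as the paper: the paper also obtains $\mathsf K_{4,q}^{\mathrm{ctr}}$ by specializing the earlier coefficient formula for $K^{(4)}_{jk}$ to $\lambda_j=1$, $\operatorname{tr}(\mathsf I_{H_q})=q$, $\nu_j\mapsto\widetilde\nu_j$, and then regrouping the $\delta_{jk}$ and $\widetilde\nu_j\widetilde\nu_k$ terms into $c_{0,q}$ and $c_{1,q}$. The only cosmetic difference is that the paper computes $\mathsf K_{2,q}$ directly from $\widetilde Y_{q,\ell}=Z_{q,\ell}+a_\ell\widetilde m_{\Delta,q}$, rather than by substituting into $\mathsf K_Y$.
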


%Proposition~1 shows that the contracted fourth-order moment operator preserves the same rank-one perturbation structure as the covariance operator. Consequently, its spectral decomposition is immediate and does not reveal any additional discriminative behaviour. To capture the phenomenon observed in the standardized fourth-order geometry, we therefore consider the alternative fourth-order construction introduced in \cite{Vidal25moment}.

Lemma~\ref{lemma1} shows that, after standardization, the contracted fourth-order moment operator retains exactly the same rank-one perturbation structure as the covariance operator. Consequently, its spectral decomposition is completely determined by the covariance geometry and does not isolate the coordinatewise fourth-order contributions that are the object of the subsequent analysis. 
Lemma~\ref{lemma1} also shows that the standardized discriminative direction $\widetilde m_{\Delta,q}$ is an eigenfunction of $\mathsf K_{4,q}^{\mathrm{ctr}}$ \cite{Pena14,Vidal25moment}. 
Since $\|\widetilde m_{\Delta,q}\|_{H_q}^{2}
=\|m_{\Delta,q}\|_{H(\mathfrak K)}^{2}$, we write
\[
r_q
:=
\|m_{\Delta,q}\|_{H(\mathfrak K)}^{2}
=
\|\widetilde m_{\Delta,q}\|_{H_q}^{2}.
\]
The corresponding discriminative eigenvalue equals
$\lambda_{\Delta,q}^{\mathrm{ctr}}=c_{0,q}+c_{1,q}r_q$,
whereas every direction orthogonal to $\widetilde m_{\Delta,q}$ has eigenvalue $c_{0,q}$.

Thus, the spectral contrast is entirely governed by the Cameron--Martin energy $r_q$. Nevertheless, because both $c_{0,q}$ and $c_{1,q}$ depend on the ambient dimension $q$, this operator does not isolate coordinatewise fourth-order contributions. This motivates the spectral analysis of the fourth-order component $\mathsf V^{(4)}$ introduced in the previous section.

\subsection{Spectral representations}
For each class $\ell$, define the random tensor
\[
\widetilde{\mathsf V}^{(2)}_{\ell,q}
=
\sum_{j,k=1}^{q}
\left(
\zeta_{\ell j}\zeta_{\ell k}
+
a_\ell^2\widetilde\nu_j\widetilde\nu_k
\right)
(\gamma_j\otimes\gamma_k).
\]
%Unlike the random rank-one operator
%$
%\widetilde Y_{q,\ell}\otimes\widetilde Y_{q,\ell},
%$
%the tensor $\widetilde{\mathsf V}^{(2)}_{\ell,q}$ retains only the random covariance contribution and the deterministic between-class contribution; the mixed linear terms are omitted.

%\widetilde Y_{q,\ell}\otimes\widetilde Y_{q,\ell}=Z_{q,\ell}\otimes Z_{q,\ell}+a_\ell(Z_{q,\ell}\otimes\widetilde m+\widetilde m\otimes Z_{q,\ell})+a_\ell^2\widetilde m\otimes\widetilde m,

The standardized counterpart of $\mathsf V^{(4)}$ is
\[
\widetilde{ \mathsf V}_{q}^{(4)}
=
\sum_{\ell=1}^{2}
\pi_\ell
\mathbb E
\left(
\widetilde{\mathsf V}^{(2)}_{\ell,q}
\otimes
\widetilde{\mathsf V}^{(2)}_{\ell,q}
\right).
\]

Expanding with respect to the orthonormal basis
$\{\gamma_j\otimes\gamma_k\}_{j,k=1}^{q}$ gives
\[
\widetilde{ \mathsf V}_{q}^{(4)}
=
\sum_{j,k,m,n=1}^{q}
\widetilde{V}_{jkmn}^{(q)}
\gamma_j\otimes\gamma_k
\otimes
\gamma_m\otimes\gamma_n.
\]
where
\[
\begin{aligned}
\widetilde{V}_{jkmn}^{(q)}
=
\sum_{\ell=1}^{2}\pi_\ell
\Bigl\{&
\mathbb E
(\zeta_{\ell j}\zeta_{\ell k}\zeta_{\ell m}\zeta_{\ell n})
\\
&+
a_\ell^2
\widetilde\nu_j\widetilde\nu_k
\,
\mathbb E(\zeta_{\ell m}\zeta_{\ell n})
+
a_\ell^2
\widetilde\nu_m\widetilde\nu_n
\,
\mathbb E(\zeta_{\ell j}\zeta_{\ell k})
\\
&+
a_\ell^4
\widetilde\nu_j\widetilde\nu_k
\widetilde\nu_m\widetilde\nu_n
\Bigr\}.
\end{aligned}
\]
Since the standardized Gaussian coefficients satisfy
$
\mathbb E(\zeta_{\ell j}\zeta_{\ell k})=\delta_{jk},
$
Isserlis' formula yields
\[
\begin{aligned}
\widetilde{V}_{jkmn}^{(q)}
={}&
\delta_{jk}\delta_{mn}
+
\delta_{jm}\delta_{kn}
+
\delta_{jn}\delta_{km}
\\
&+
\rho
\left[
\widetilde\nu_j\widetilde\nu_k\delta_{mn}
+
\widetilde\nu_m\widetilde\nu_n\delta_{jk}
\right]
\\
&+
\rho(1-3\rho)
\widetilde\nu_j\widetilde\nu_k
\widetilde\nu_m\widetilde\nu_n.
\end{aligned}
\]

%The Gaussian pairings generated by Isserlis' formula couple different tensor directions, so that the operator $\mathsf K_q^{\otimes4}$ is not diagonal with respect to the tensor-product Karhunen--Loève basis. Consequently, its spectral decomposition cannot be read directly from the tensor coefficients.

The coefficients $\widetilde V_{jkmn}^{(q)}$ represent the matrix entries of
$\widetilde{\mathsf V}_q^{(4)}$ with respect to the tensor-product
Karhunen--Loève basis.
%$\{\gamma_j\otimes\gamma_k\}_{j,k=1}^q$.
Although the covariance operator is diagonal in this basis, the associated
fourth-order operator is not. The Gaussian pairings arising from Isserlis'
formula couple distinct tensor directions, while the mean-shift contribution
introduces additional interactions through the coefficients
$\widetilde\nu_j$. For example, discarding the mean-shift terms, the Gaussian part yields
$\widetilde V_{jkmn}^{(q)}=0$ whenever no pairing of the indices is possible. Moreover, $\widetilde V_{jkjk}^{(q)}=1$ and $\widetilde V_{kjjk}^{(q)}=1$, $j\neq k$, coupling the tensor directions
$\gamma_j\otimes\gamma_k$
and
$\gamma_k\otimes\gamma_j$.
Likewise, $\widetilde V_{jjkk}^{(q)}=1$, $j\neq k$, couples the diagonal tensor directions
$\gamma_j\otimes\gamma_j$
and
$\gamma_k\otimes\gamma_k$,
whereas $\widetilde V_{jjjj}^{(q)}=3$.
%These Gaussian couplings are subsequently perturbed by the additional mean-shift terms involving the standardized coefficients $\widetilde\nu_j$.

The preceding discussion shows that the full fourth-order operator couples
different tensor directions, so that its spectrum cannot be characterized
directly through the tensor-product Karhunen--Loève basis. We therefore
consider the diagonal coordinates of the random tensor
$\widetilde{\mathsf V}^{(2)}_{\ell,q}$. The coordinate along the diagonal tensor direction
$\gamma_j\otimes\gamma_j$ is
\[
Q_{\ell j}
:=
\left\langle
\widetilde{\mathsf V}^{(2)}_{\ell,q},
\gamma_j\otimes\gamma_j
\right\rangle_{\mathcal L_2(H_q)}
=
\zeta_{\ell j}^{\,2}
+
a_\ell^{2}\widetilde\nu_j^{\,2},
\qquad
j=1,\ldots,q.
\]

\begin{lemma} \label{lem:diagonal-coefficients}
For every $j=1,\ldots,q$, the completely diagonal coefficient of
$\widetilde{\mathsf V}_q^{(4)}$ satisfies
\[
\kappa_{j,q}
:=
\widetilde V_{jjjj}^{(q)}
=
\sum_{\ell=1}^{2}
\pi_\ell
\mathbb E\bigl(Q_{\ell j}^{2}\bigr)
=
3
+
2\rho\,\widetilde\nu_j^{\,2}
+
\rho(1-3\rho)\widetilde\nu_j^{\,4}.
\]
\end{lemma}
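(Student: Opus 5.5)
The proof has two parts: identify $\kappa_{j,q}$ with the second moment of the diagonal coordinate $Q_{\ell j}$, and then evaluate that moment using standard Gaussian moments.

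For the first identity, start from the definition of $\widetilde{\mathsf V}_q^{(4)}$ as a mixture of second moments of $\widetilde{\mathsf V}^{(2)}_{\ell,q}$ in $\mathcal L_2(H_q)\cong H_q\otimes H_q$. The coefficient of $\gamma_j\otimes\gamma_j\otimes\gamma_j\otimes\gamma_j$ equals
\[
\sum_{\ell}\pi_\ell\,\mathbb E\bigl[\langle \widetilde{\mathsf V}^{(2)}_{\ell,q},\gamma_j\otimes\gamma_j\rangle\,\langle \widetilde{\mathsf V}^{(2)}_{\ell,q},\gamma_j\otimes\gamma_j\rangle\bigr].
\]
This holds because the coordinate functional of an elementary tensor $A\otimes B$ along $(\gamma_j\otimes\gamma_j)\otimes(\gamma_j\otimes\gamma_j)$ is $\langle A,\gamma_j\otimes\gamma_j\rangle\langle B,\gamma_j\otimes\gamma_j\rangle$. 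Continuous linear functionals commute with Bochner integration, which is trivial here since everything lives in finite dimension $q$. The integrand is therefore $Q_{\ell j}^2$, which gives the middle equality. Equivalently, set $j=k=m=n$ in the displayed formula for the entries of $\widetilde V^{(q)}_{jkmn}$ before Isserlis is applied; the bracket is then $\mathbb E[(\zeta_{\ell j}^2+a_\ell^2\widetilde\nu_j^2)^2]$.

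For the closed form, expand
\[
Q_{\ell j}^2=\zeta_{\ell j}^4+2a_\ell^2\widetilde\nu_j^2\zeta_{\ell j}^2+a_\ell^4\widetilde\nu_j^4 .
\]
Since $\zeta_{\ell j}\sim N(0,1)$, we have $\mathbb E\zeta_{\ell j}^4=3$ and $\mathbb E\zeta_{\ell j}^2=1$. Averaging over $\ell$ with weights $\pi_\ell$ and using \eqref{eq1} and \eqref{eq2}, namely $\sum_\ell\pi_\ell=1$, $\sum_\ell\pi_\ell a_\ell^2=\rho$ and $\sum_\ell\pi_\ell a_\ell^4=\rho(1-3\rho)$, yields $3+2\rho\widetilde\nu_j^2+\rho(1-3\rho)\widetilde\nu_j^4$.

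As a consistency check, set $j=k=m=n$ in the already-computed Isserlis formula for $\widetilde V^{(q)}_{jkmn}$. The three Kronecker products each give $1$, for a total of $3$. The bracket gives $\rho(\widetilde\nu_j^2+\widetilde\nu_j^2)=2\rho\widetilde\nu_j^2$, and the last term gives $\rho(1-3\rho)\widetilde\nu_j^4$, so the two computations agree.

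No step is a genuine obstacle. The only point needing care is the identification of the tensor coordinate with $\mathbb E(Q_{\ell j}^2)$. This requires stating precisely the identification $H_q^{\otimes4}\cong\mathcal L_2(H_q)\otimes\mathcal L_2(H_q)$, under which $(\gamma_j\otimes\gamma_k)\otimes(\gamma_m\otimes\gamma_n)$ corresponds to $\gamma_j\otimes\gamma_k\otimes\gamma_m\otimes\gamma_n$, so that the coefficient $\widetilde V^{(q)}_{jjjj}$ really is the inner product of the second moment with $(\gamma_j\otimes\gamma_j)^{\otimes 2}$.
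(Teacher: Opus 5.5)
Your proposal is correct and follows essentially the same route as the paper. The paper also sets $j=k=m=n$ in the pre-Isserlis expression for $\widetilde V_{jkmn}^{(q)}$, recognizes the bracket as $\mathbb E\bigl[(\zeta_{\ell j}^{2}+a_\ell^{2}\widetilde\nu_j^{\,2})^{2}\bigr]$, and concludes with $\mathbb E\zeta_{\ell j}^{2}=1$, $\mathbb E\zeta_{\ell j}^{4}=3$ and \eqref{eq1}--\eqref{eq2}; your coordinate-functional argument on $\mathcal L_2(H_q)\otimes\mathcal L_2(H_q)$ just makes explicit an identification the paper uses implicitly.
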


The lemma above shows that the completely diagonal coefficients of
$\widetilde{\mathsf V}_q^{(4)}$ are precisely the second moments of the diagonal
coordinates of $\widetilde{\mathsf V}^{(2)}_{\ell,q}$. Thus, each Karhunen–Loève direction 
is naturally associated with a scalar fourth-order quantity through the corresponding 
diagonal coordinate of  $\widetilde{\mathsf V}^{(2)}_{\ell,q}$.

%Since the construction below is coordinatewise, it depends on the choice of a Karhunen–Loève basis. 
%When the covariance operator has repeated eigenvalues, an orthonormal basis is fixed once and for all within each corresponding eigenspace.
%The resulting operator therefore depends on this choice whenever repeated eigenvalues occur, although all subsequent results are understood relative to the fixed basis.

Under this correspondence, identifying
$\gamma_j\otimes\gamma_j$
with the corresponding rank-one projection on $H_q$, define the operator
\[
\mathsf{\widetilde K}_{4,q}
=
\sum_{j=1}^{q}
\kappa_{j,q}
(\gamma_j\otimes\gamma_j).
\]
By construction,
$\mathsf{\widetilde K}_{4,q}\gamma_j=\kappa_{j,q}\gamma_j$,
$j=1,\ldots,q$, so the Karhunen--Loève eigenfunctions remain eigenfunctions of
$\mathsf{\widetilde K}_{4,q}$, with corresponding eigenvalues
$\kappa_{j,q}$.

%Thus, the operator $\mathsf{\widetilde K}_{4,q}$ represents the coordinatewise
%fourth-order information from the tensor-product space back to the original
%Hilbert space, where the Fisher discriminant is defined. This
%provides a common framework in which second- and fourth-order
%discriminative information can be compared directly.

The operator $\mathsf{\widetilde K}_{4,q}$ is obtained from
$\widetilde{\mathsf V}_q^{(4)}$ by extracting its completely diagonal entries in the tensor-product Karhunen–Loève basis. It is therefore neither the restriction nor the compression of $\widetilde{\mathsf V}_q^{(4)}$ to the diagonal tensor subspace, since the latter also contains the non-zero coefficients $\widetilde V_{jjkk}^{(q)}$, $j\neq k$. Rather, it isolates the marginal fourth-order contribution associated with each covariance eigenfunction. Equivalently, 
we have
\[
\mathsf{\widetilde K}_{4,q}
=
\sum_{j=1}^{q}
\left\langle
\widetilde{\mathsf V}_q^{(4)}
(\gamma_j\otimes\gamma_j),
\gamma_j\otimes\gamma_j
\right\rangle_{\mathcal L_2(H_q)}
(\gamma_j\otimes\gamma_j).
\]
showing that $\mathsf{\widetilde K}_{4,q}$ is obtained by a linear extraction of the completely diagonal coefficients of $\widetilde{\mathsf V}_q^{(4)}$.

The completely diagonal coefficients admit a natural geometric interpretation in
terms of the Cameron--Martin geometry induced by the covariance operator.
%Assume that $\lambda_j>0$ for every $j$. 
The Cameron--Martin space
associated with $\mathsf K$ is
\[
H(\mathfrak K)
=
\left\{
f=\sum_{j=1}^{\infty}f_j\gamma_j\in H:
\sum_{j=1}^{\infty}\frac{f_j^2}{\lambda_j}<\infty
\right\},
\]
equipped with the inner product
\[
\langle f,g\rangle_{H(\mathfrak K)}
=
\sum_{j=1}^{\infty}
\frac{
\langle f,\gamma_j\rangle_H
\langle g,\gamma_j\rangle_H
}{\lambda_j}.
\]
In particular,
\[
\|m_\Delta\|_{H(\mathfrak K)}^2
=
\sum_{j=1}^{\infty}
\frac{\nu_j^2}{\lambda_j}.
\]

Analogously, the operator $\mathsf K^2$ has eigenfunctions
$\{\gamma_j\}_{j\ge1}$ and eigenvalues $\{\lambda_j^2\}_{j\ge1}$.
The associated Hilbert space is
\[
H_2(\mathfrak K)
=
\left\{
f=\sum_{j=1}^{\infty}f_j\gamma_j\in H:
\sum_{j=1}^{\infty}\frac{f_j^2}{\lambda_j^2}<\infty
\right\},
\]
with inner product
\[
\langle f,g\rangle_{H_2(\mathfrak K)}
=
\sum_{j=1}^{\infty}
\frac{
\langle f,\gamma_j\rangle_H
\langle g,\gamma_j\rangle_H
}{\lambda_j^2}.
\]

For each Karhunen--Loève direction, define
\[
m_{\Delta,j}
=
\nu_j\gamma_j,
\qquad
m_{\Delta,j}^{\langle 2 \rangle}
=
\nu_j^2\gamma_j.
\]
Since these elements are supported on a single covariance eigenfunction,
their squared norms reduce to
\[
\|m_{\Delta,j}\|_{H(\mathfrak K)}^2
=
\frac{\nu_j^2}{\lambda_j}
=
\widetilde\nu_j^{\,2}, \quad 
\|m_{\Delta,j}^{\langle 2 \rangle}\|_{H_2(\mathfrak K)}^2
=
\frac{\nu_j^4}{\lambda_j^2}
=
\widetilde\nu_j^{\,4}.
\]
Substituting these identities into the expression for
$\kappa_{j,q}$ yields
\[
\kappa_{j,q}
=
3
+
2\rho
\|m_{\Delta,j}\|_{H(\mathfrak K)}^2
+
\rho(1-3\rho)
\|m_{\Delta,j}^{\langle 2 \rangle}\|_{H_2(\mathfrak K)}^2.
\]

Thus, each diagonal fourth-order coefficient combines the coordinatewise Cameron--Martin energy
of the mean difference with the corresponding quadratic norm induced by
$\mathsf K^2$. Summing over all coordinates yields the global energies
\[
\|m_{\Delta,q}\|_{H(\mathfrak K)}^2
=
\sum_{j=1}^{q}\frac{\nu_j^2}{\lambda_j},
\qquad
\left\|
\sum_{j=1}^{q}m_{\Delta,j}^{\langle 2 \rangle}
\right\|_{H_2(\mathfrak K)}^2
=
\sum_{j=1}^{q}\frac{\nu_j^4}{\lambda_j^2},
\]
which will determine the asymptotic behaviour of the diagonal fourth-order
spectrum as $q\to\infty$.
These cumulative quantities will later define the aggregated fourth-order spectral functionals.

%The standardized fourth-order tensor $\widetilde{\mathsf V}_q^{(4)}$ induces an operator on $H_q$ obtained from its completely coefficients.
%The following result characterizes when the truncated Fisher discriminant belongs to one of its eigenspaces.

The standardized fourth-order tensor $\widetilde{\mathsf V}_q^{(4)}$ induces a coordinatewise operator on $H_q$ by expressing the tensor in a fixed Karhunen--Loève basis and retaining only its completely diagonal coefficients\footnote{This construction differs from the kurtosis operator introduced by Pe\~na, Prieto and Rend\'on \cite{Pena14}. Their operator is intrinsically defined and the discriminant eigendirection follows from the rotational invariance of the standardized Gaussian mixture. By contrast, the present operator is the coordinatewise representation obtained by extracting the marginal fourth-order contributions associated with each Karhunen--Loève direction.}. The resulting operator is therefore not defined independently of the Karhunen--Loève representation. When the covariance operator has repeated eigenvalues, different orthonormal bases within the corresponding eigenspaces induce different coordinatewise operators, although they represent the same underlying fourth-order tensor. Accordingly, all subsequent spectral statements are understood relative to the fixed Karhunen--Loève representation. The following result characterizes when the truncated Fisher discriminant belongs to one of its eigenspaces.

\begin{theorem} \label{T1}
Assume that
$
m_{\Delta,q}\neq0
$
and let
$
\beta_q
=
\mathsf K_q^{\dagger}m_{\Delta,q}
=
\mathsf K_q^{\dagger/2}\widetilde m_{\Delta,q}
$
denote the truncated Fisher discriminant. Then
$
\beta_q
$
is an eigenfunction of the induced coordinatewise operator
$
\mathsf{\widetilde K}_{4,q}
$
if and only if there exists a constant
$
c>0
$
such that
$
\widetilde\nu_j^{\,2}=c
$
for every
$
j
$
with
$
\widetilde\nu_j\neq0.
$
In that case,
\[
\mathsf{\widetilde K}_{4,q}\beta_q=\kappa_{\Delta,q}\beta_q,
\qquad
\kappa_{\Delta,q}=3+2\rho c+\rho(1-3\rho)c^2.
\]
In particular, the condition is automatically satisfied whenever
$
\widetilde m_{\Delta,q}
$
is supported on a single Karhunen--Loève direction.
\end{theorem}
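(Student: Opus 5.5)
The plan is a direct coordinate computation in the Karhunen--Loève basis, using that $\mathsf{\widetilde K}_{4,q}$ is diagonal there with eigenvalues $\kappa_{j,q}$ given by Lemma~\ref{lem:diagonal-coefficients}. First I will write the truncated Fisher discriminant explicitly. Since $\mathsf K_q^{\dagger}=\sum_{j\le q}\lambda_j^{-1}\gamma_j\otimes\gamma_j$ on $H_q$, we have
\[
\beta_q=\sum_{j=1}^q \frac{\nu_j}{\lambda_j}\gamma_j=\sum_{j=1}^q \frac{\widetilde\nu_j}{\sqrt{\lambda_j}}\gamma_j ,
\]
so the coordinate $\langle\beta_q,\gamma_j\rangle_H$ is nonzero exactly when $\widetilde\nu_j\neq0$. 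Let $S=\{j\le q:\widetilde\nu_j\neq0\}$; this set is nonempty because $m_{\Delta,q}\neq0$. Applying $\mathsf{\widetilde K}_{4,q}$ gives $\mathsf{\widetilde K}_{4,q}\beta_q=\sum_{j\in S}\kappa_{j,q}\,\lambda_j^{-1/2}\widetilde\nu_j\gamma_j$. Hence $\beta_q$ is an eigenfunction with eigenvalue $\mu$ if and only if $\kappa_{j,q}=\mu$ for every $j\in S$, i.e. $\kappa_{j,q}$ is constant on $S$.

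The second step translates constancy of $\kappa_{j,q}$ into constancy of $\widetilde\nu_j^2$. Write $\kappa_{j,q}=\phi(\widetilde\nu_j^{\,2})$ with $\phi(x)=3+2\rho x+\rho(1-3\rho)x^2$.

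For sufficiency, if $\widetilde\nu_j^2=c$ on $S$, then $\kappa_{j,q}=\phi(c)$ on $S$. This immediately gives the eigenvalue $\kappa_{\Delta,q}=3+2\rho c+\rho(1-3\rho)c^2$.

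For necessity, I need $\phi$ to be injective on $(0,\infty)$. Since $\rho=\alpha(1-\alpha)\in(0,1/4]$, we have $1-3\rho\ge 1/4>0$. Therefore $\phi'(x)=2\rho+2\rho(1-3\rho)x>0$ for $x\ge0$, and $\phi$ is strictly increasing on $[0,\infty)$. So $\phi(\widetilde\nu_j^2)=\phi(\widetilde\nu_k^2)$ forces $\widetilde\nu_j^2=\widetilde\nu_k^2$, and the common value $c$ is positive because $j\in S$.

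This monotonicity argument is the only point needing care, and it is the step I expect to be the main obstacle: it relies on $1-3\rho>0$, which holds for every mixing proportion $0<\alpha<1$. Without that, a quadratic could take equal values at two distinct positive arguments.

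Finally, if $\widetilde m_{\Delta,q}$ is supported on a single direction, then $S$ is a singleton and the condition holds trivially with $c=\widetilde\nu_{j_0}^2$. In that case $\beta_q$ is a multiple of $\gamma_{j_0}$, which is an eigenfunction by construction.
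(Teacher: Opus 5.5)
Your proposal is correct and follows the paper's proof essentially step for step: you expand $\beta_q=\sum_{j}\widetilde\nu_j\lambda_j^{-1/2}\gamma_j$, compare coefficients against the diagonal action of the coordinatewise operator to reduce eigenfunctionhood to constancy of $\kappa_{j,q}$ on the active coordinates, and then invert via strict monotonicity of $x\mapsto 3+2\rho x+\rho(1-3\rho)x^2$ on $[0,\infty)$, which holds because $1-3\rho>0$ when $\rho\le 1/4$. Your treatment of the single-direction case also matches the paper's.
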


\begin{remark}
The condition of Theorem~\ref{T1} does not require the discriminant to
be supported on a single Karhunen--Loève direction. Several active
directions may coexist, provided they share the same standardized
discriminative energy $\widetilde\nu_j^{\,2}=c$. In that case,
$
E_c
:=
\operatorname{span}
\left\{
\gamma_j:
\widetilde\nu_j\neq0
\right\}
$
is contained in a single eigenspace of $\mathsf{\widetilde K}_{4,q}$, and
$\beta_q$ is the particular element of $E_c$ determined by the coefficients
$\widetilde\nu_j/\sqrt{\lambda_j}$.
\end{remark}

\subsection{Aggregated spectral functionals and Gaussian measure singularity}
The fourth-order tensor $\widetilde{\mathsf V}^{(4)}_q$ introduced above is defined as the 
second moment of the random tensor $\widetilde{\mathsf V}^{(2)}_{\ell,q}$. A natural deterministic counterpart
is obtained by replacing the second moment within each mixture component by
the tensor product of its first moment. Accordingly, define
\[
\mathsf F_q
:=
\sum_{\ell=1}^{2}
\pi_\ell
\bigl(
\mathbb E\widetilde{\mathsf V}^{(2)}_{\ell,q}
\bigr)
\otimes
\bigl(
\mathbb E\widetilde{\mathsf V}^{(2)}_{\ell,q}
\bigr).
\]

Since
\[
\begin{aligned}
\mathbb E\widetilde{\mathsf V}^{(2)}_{\ell,q}
&=
\sum_{j,k=1}^{q}
\left(
\delta_{jk}
+
a_\ell^2\widetilde\nu_j\widetilde\nu_k
\right)
(\gamma_j\otimes\gamma_k) \\
&=
\mathsf I_{H_q}
+
a_\ell^2
\widetilde m_{\Delta,q}\otimes
\widetilde m_{\Delta,q},
\end{aligned}
\]
the completely diagonal coefficients of $\mathsf F_q$ are
\[
\kappa_{j,q}^{(F)}
=
1
+
2\rho\,\widetilde\nu_j^{\,2}
+
\rho(1-3\rho)\widetilde\nu_j^{\,4}.
\]
As before, these coefficients induce the operator
\[
\mathsf{\widetilde K}_q^{(F)}
=
\sum_{j=1}^{q}
\kappa_{j,q}^{(F)}
(\gamma_j\otimes\gamma_j),
\]
whose eigenfunctions are the Karhunen--Loève eigenfunctions.
As with $\mathsf{\widetilde K}_{4,q}$, the operator
$\mathsf{\widetilde K}_q^{(F)}$ retains only the completely diagonal
coefficients of $\mathsf F_q$ and is not the compression of $\mathsf F_q$
to the diagonal tensor subspace.

The induced operator
$\mathsf{\widetilde K}_q^{(F)}$
is the deterministic counterpart of
$\mathsf{\widetilde K}_{4,q}$.
Their corresponding eigenvalues satisfy
$\kappa_{j,q}=\kappa_{j,q}^{(F)}+2$, where the constant $2$ represents the purely Gaussian fourth-order baseline, since $\mathbb E(\zeta^4)=3$, $\bigl(\mathbb E(\zeta^2)\bigr)^2=1$, and $\zeta\sim N(0,1)$.
Thus, the discriminative contributions of both constructions coincide exactly, and they differ only through this nondiscriminative Gaussian component.
%Since the discriminative information is distributed over the diagonal spectrum, a natural global summary 
%is obtained by aggregating the diagonal eigenvalues.

Since each diagonal coefficient contains a constant Gaussian contribution
($1$ for $\mathsf{\widetilde K}_{q}^{(F)}$ and $3$ for
$\mathsf{\widetilde K}_{4,q}$), a direct summation of the eigenvalues would
produce a quantity growing linearly with $q$ even in the absence of any
mean difference. To isolate the discriminative contribution, the Gaussian
baseline is removed before aggregation and restored only once as a reference
level. This yields the aggregated spectral functionals
\[
\mathcal A_q^{(F)}
:=
1+
\sum_{j=1}^{q}
\bigl(
\kappa_{j,q}^{(F)}-1
\bigr),
\quad
\mathcal A_q^{(4)}
:=
3+
\sum_{j=1}^{q}
\bigl(
\kappa_{j,q}-3
\bigr).
\]
Substituting the previous expressions yields
\[
\mathcal A_q^{(F)}
=
1+
2\rho
\sum_{j=1}^{q}
\widetilde\nu_j^{\,2}
+
\rho(1-3\rho)
\sum_{j=1}^{q}
\widetilde\nu_j^{\,4},
\]
and
\[
\mathcal A_q^{(4)}
=
3+
2\rho
\sum_{j=1}^{q}
\widetilde\nu_j^{\,2}
+
\rho(1-3\rho)
\sum_{j=1}^{q}
\widetilde\nu_j^{\,4}.
\]
Hence $\mathcal A_q^{(4)}=\mathcal A_q^{(F)}+2$.

Now, define the common discriminative contribution
\begin{equation}\label{discener}
\Delta_q
:=
2\rho\sum_{j=1}^{q}\widetilde\nu_j^{\,2}
+
\rho(1-3\rho)\sum_{j=1}^{q}\widetilde\nu_j^{\,4}.
\end{equation}
Then $\mathcal A_q^{(4)}=3+\Delta_q$ and
$\mathcal A_q^{(F)}=1+\Delta_q$.
The asymptotic relationship between these aggregated spectral functionals
is established in the following theorem. 

\begin{theorem}
\label{thm:asymptotic-equivalence}
Suppose that
$
m_\Delta\notin H(\mathfrak K).
$
Then
\[
\frac{\mathcal A_q^{(4)}}{\mathcal A_q^{(F)}}\longrightarrow1,
\qquad q\to\infty,
\qquad\text{equivalently,}\qquad
\mathcal A_q^{(4)}\sim\mathcal A_q^{(F)}.
\]
\end{theorem}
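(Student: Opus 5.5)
Since $\mathcal A_q^{(4)}=3+\Delta_q$ and $\mathcal A_q^{(F)}=1+\Delta_q$ with $\Delta_q$ defined in \eqref{discener}, the ratio is
\[
\frac{\mathcal A_q^{(4)}}{\mathcal A_q^{(F)}}=1+\frac{2}{1+\Delta_q},
\]
provided $1+\Delta_q>0$. The statement therefore reduces to showing $\Delta_q\to\infty$ as $q\to\infty$ whenever $m_\Delta\notin H(\mathfrak K)$. The plan is to first establish positivity and monotonicity of $\Delta_q$, then connect its growth to the divergence of the Cameron--Martin energy $r_q=\sum_{j\le q}\widetilde\nu_j^{\,2}$.

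First I would check the sign of the quartic coefficient. Since $\rho=\alpha(1-\alpha)\in(0,1/4]$, we have $1-3\rho\ge 1/4>0$. Hence every summand $2\rho\widetilde\nu_j^{\,2}+\rho(1-3\rho)\widetilde\nu_j^{\,4}$ is nonnegative. It follows that $\Delta_q$ is nondecreasing in $q$, that $1+\Delta_q\ge1$ (so the ratio is well defined), and that
\[
\Delta_q\ \ge\ 2\rho\sum_{j=1}^{q}\widetilde\nu_j^{\,2}=2\rho\,r_q .
\]

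Next I would invoke the hypothesis. By the definition of $H(\mathfrak K)$ via the Karhunen--Loève expansion, and since $m_\Delta\in H$, the condition $m_\Delta\notin H(\mathfrak K)$ means exactly $\sum_{j\ge1}\nu_j^2/\lambda_j=\infty$, that is, $r_q\uparrow\infty$. The lower bound then gives $\Delta_q\to\infty$, so $2/(1+\Delta_q)\to0$ and the ratio tends to $1$.

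There is no real obstacle here. The only point needing care is the sign of $1-3\rho$: if it could be negative, the quartic term might compete with the quadratic one and spoil both monotonicity and the lower bound. This is settled by the elementary bound $\rho\le1/4$. The argument does not need the quartic sum to diverge, only to be nonnegative.
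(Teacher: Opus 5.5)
Your proof is correct and matches the paper's argument: you write the ratio as $1+2/(1+\Delta_q)$, then use nonnegativity of every summand of $\Delta_q$ together with $\sum_j\widetilde\nu_j^{\,2}=\infty$ (which is the hypothesis $m_\Delta\notin H(\mathfrak K)$) to get $\Delta_q\to\infty$. Your explicit check that $1-3\rho\ge 1/4$ (equivalently, that $\sum_\ell\pi_\ell a_\ell^4\ge 0$) and the bound $\Delta_q\ge 2\rho\, r_q$ simply spell out the nonnegativity the paper invokes.
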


The asymptotic equivalence reflects the fact that the two constructions
differ only through the Gaussian baseline contribution. Indeed,
$
\mathcal A_q^{(4)}-\mathcal A_q^{(F)}=2,
$
whereas
$
\mathcal A_q^{(F)}\to\infty
$
in the singular regime. Consequently,
\[
\frac{
\mathcal A_q^{(4)}
-
\mathcal A_q^{(F)}
}{
\mathcal A_q^{(F)}
}
=
\frac{2}{1+\Delta_q}
\longrightarrow0,
\]
showing that the constant Gaussian contribution becomes asymptotically negligible relative to the diverging discriminative component. 
%Therefore, both constructions induce asymptotically equivalent fourth-order spectral summaries, 
%despite remaining distinct operators for every finite truncation level.

This equivalence does not introduce a new boundary between equivalence and
singularity: that boundary remains the classical Cameron--Martin condition.
The fourth-order contribution modifies the magnitude of the aggregated
spectral quantities, but not the criterion determining whether they diverge.

Although
$\mathsf K_{4,q}^{\mathrm{ctr}}$
and
$\mathsf{\widetilde K}_{4,q}$
arise from different fourth-order constructions, both contain the
Cameron--Martin energy
$
r_q
$
as the quantity governing their asymptotic discriminative growth.
The discriminative eigenvalue of
$\mathsf K_{4,q}^{\mathrm{ctr}}$
depends on $r_q$ through Lemma~\ref{lemma1}, whereas the aggregated diagonal functional
$\mathcal A_q^{(4)}$
depends on \eqref{discener}.
Thus, the two constructions differ through additional finite-dimensional
contributions, but these do not alter the divergence criterion, which is
determined entirely by the Cameron--Martin energy.

The next theorem establishes that the divergence of the aggregated
discriminative contribution is equivalent to the singularity of the
underlying Gaussian measures in the sense of Feldman--Hájek. Recall the
definition of $\Delta_q$ in \eqref{discener}.

\begin{theorem}%[Spectral characterization of the singular regime]
\label{T2}
Fix $\alpha\in(0,1)$ and let $\rho=\alpha(1-\alpha)$. Let $P_1=\mathbb G(m_1,\mathsf K)$ and $P_2=\mathbb G(m_2,\mathsf K)$ be Gaussian measures on a separable Hilbert space $H$ with common covariance
operator $\mathsf K$, whose eigenvalues satisfy $\lambda_j>0$ for every $j$,
and let $m_\Delta=m_2-m_1$.
Then the following statements are equivalent:
\[
P_1\perp P_2,
\qquad
\Delta_q\longrightarrow\infty,
\qquad
\frac{\mathcal A_q^{(4)}}{\mathcal A_q^{(F)}}\longrightarrow1.
\]
Equivalently, $P_1\perp P_2\iff\mathcal A_q^{(4)}\sim\mathcal A_q^{(F)}$.
\end{theorem}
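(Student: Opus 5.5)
The plan is to prove the chain of equivalences
\[
P_1\perp P_2 \iff \|m_\Delta\|_{H(\mathfrak K)}^2=\infty \iff \Delta_q\to\infty \iff \mathcal A_q^{(4)}/\mathcal A_q^{(F)}\to1 .
\]

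\textbf{Step 1: singularity and the Cameron--Martin norm.} The first equivalence is the classical Cameron--Martin/Feldman--H\'ajek result for a common covariance operator. Since $\lambda_j>0$, $P_1$ and $P_2$ are equivalent if and only if $m_\Delta\in H(\mathfrak K)$, i.e.\ $\sum_j \nu_j^2/\lambda_j<\infty$. By the dichotomy, $P_1\perp P_2$ if and only if $\sum_j\widetilde\nu_j^{\,2}=\infty$. I will simply invoke this result, citing Feldman, H\'ajek or Shepp as the paper does.

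\textbf{Step 2: the core, $\Delta_q\to\infty$ iff $r_q\to\infty$.} Write
\[
r_q=\sum_{j\le q}\widetilde\nu_j^{\,2},\qquad s_q=\sum_{j\le q}\widetilde\nu_j^{\,4}.
\]
Both are nondecreasing in $q$. Since $\rho=\alpha(1-\alpha)\in(0,1/4]$, we have $1-3\rho\ge 1/4>0$. Hence every summand in $\Delta_q=2\rho r_q+\rho(1-3\rho)s_q$ is nonnegative, and $\Delta_q$ is nondecreasing.

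For the forward direction, $\Delta_q\ge 2\rho r_q$, so $r_q\to\infty$ forces $\Delta_q\to\infty$.

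For the converse, suppose $r_q\to R<\infty$. Then $\widetilde\nu_j^{\,2}\le R$ for all $j$, so $s_q\le R\,r_q\le R^2$. Alternatively use $\sum a_j^2\le(\sum a_j)^2$ with $a_j=\widetilde\nu_j^{\,2}\ge0$. Either way $\Delta_q$ stays bounded by $2\rho R+\rho(1-3\rho)R^2$.

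This step is the only place where positivity of $1-3\rho$ matters. It rules out cancellation between the two terms, so I regard it as the main point to get right, though it is elementary.

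\textbf{Step 3: $\Delta_q\to\infty$ iff the ratio tends to $1$.} We have
\[
\mathcal A_q^{(4)}/\mathcal A_q^{(F)} = 1+\frac{2}{1+\Delta_q}.
\]
This tends to $1$ if and only if $\Delta_q\to\infty$, since $\Delta_q\ge0$ keeps the denominator positive. If instead $\Delta_q$ converges to a finite limit $D$, which it must by monotonicity when bounded, the ratio tends to $1+2/(1+D)>1$.

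The forward implication of Step 3 recovers Theorem~\ref{thm:asymptotic-equivalence}. Combining Steps 1–3 closes the cycle of equivalences and gives the final reformulation $P_1\perp P_2\iff\mathcal A_q^{(4)}\sim\mathcal A_q^{(F)}$.
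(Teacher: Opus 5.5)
Your proposal is correct and follows the same route as the paper. Feldman--H\'ajek reduces singularity to $\sum_j\widetilde\nu_j^{\,2}=\infty$. The bound $\sum_j\widetilde\nu_j^{\,4}\le(\sum_j\widetilde\nu_j^{\,2})^2$ together with $2\rho>0$ then shows that $\Delta_q\to\infty$ exactly when that series diverges. Finally, the identity $\mathcal A_q^{(4)}/\mathcal A_q^{(F)}=1+2/(1+\Delta_q)$ closes the cycle.

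Your explicit check that $1-3\rho\ge 1/4>0$ rules out cancellation between the two terms of $\Delta_q$. The paper's proof of this theorem uses that fact only implicitly; it is recorded in the proofs of Theorems~\ref{T1} and~\ref{thm:asymptotic-equivalence}.
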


The previous theorem shows that the convergence $\mathcal A_q^{(4)}/\mathcal A_q^{(F)}\longrightarrow1$ is equivalent, under the common-covariance model, to mutual singularity of
the underlying Gaussian measures. The equivalence--singularity boundary
itself remains the classical Cameron--Martin condition; the contribution of
the present construction is to show how this condition is encoded by
aggregated quantities derived from the completely diagonal coefficients of
a fourth-order operator.

In the general heteroscedastic setting, equivalence additionally requires
the relative covariance operator to be a Hilbert--Schmidt perturbation of
the identity, giving rise to the regularized Fredholm determinants appearing
in the explicit Radon--Nikodym derivatives of Shepp \cite{SheppRN66}; see
also \cite{Hediger26} for a recent RKHS treatment of this covariance-driven
equivalence problem. In the homoscedastic framework considered here, this
covariance contribution is trivial, and the dichotomy is governed entirely
by the Cameron--Martin norm of the mean difference.

\appendix
\section{Proofs of formal statements}

\subsection{Proof of lemma~\ref{lemma1}}
\begin{proof}
Write
\[
b_\ell=a_\ell\widetilde m_{\Delta,q},
\qquad
\pi_1=\alpha,\qquad
\pi_2=1-\alpha.
\]
Conditionally on class $\ell$,
$
\widetilde Y_{q,\ell}=Z_{q,\ell}+b_\ell,
$
where
\[
\mathbb E(Z_{q,\ell})=0,
\qquad
\mathbb E(Z_{q,\ell}\otimes Z_{q,\ell})=\mathsf I_{H_q}.
\]
Hence
\[
\begin{aligned}
\mathsf K_{2,q}
&=\sum_{\ell=1}^{2}\pi_\ell
\mathbb E\!\left[(Z_{q,\ell}+b_\ell)\otimes(Z_{q,\ell}+b_\ell)\right] \\
&=\mathsf I_{H_q}
+\sum_{\ell=1}^{2}\pi_\ell(b_\ell\otimes b_\ell) \\
&=\mathsf I_{H_q}
+\rho\,\widetilde m_{\Delta,q}\otimes\widetilde m_{\Delta,q}.
\end{aligned}
\]

Applying the coefficient expansion for the contracted fourth-order moment operator derived in the previous section, under standardization
\[
\lambda_j=1,
\qquad
j=1,\ldots,q,
\qquad
\operatorname{tr}(\mathsf I_{H_q})=q,
\]
we obtain
\[
\begin{aligned}
K_{jk}^{(4)}
={}&
(q+2)\delta_{jk} \\
&+
\left(
\sum_{\ell=1}^{2}\pi_\ell a_\ell^2
\right)
\Bigl[
(q+4)\widetilde\nu_j\widetilde\nu_k
+
\|\widetilde m_{\Delta,q}\|_{H_q}^{2}\delta_{jk}
\Bigr] \\
&+
\left(
\sum_{\ell=1}^{2}\pi_\ell a_\ell^4
\right)
\|\widetilde m_{\Delta,q}\|_{H_q}^{2}
\widetilde\nu_j\widetilde\nu_k.
\end{aligned}
\]

Using \eqref{eq1} and \eqref{eq2},
\[
\begin{aligned}
K_{jk}^{(4)}
={}&
\left[
q+2+\rho\|\widetilde m_{\Delta,q}\|_{H_q}^{2}
\right]\delta_{jk} \\
&+
\left[
\rho(q+4)
+\rho(1-3\rho)\|\widetilde m_{\Delta,q}\|_{H_q}^{2}
\right]
\widetilde\nu_j\widetilde\nu_k.
\end{aligned}
\]

Since
\[
\mathsf I_{H_q}
=
\sum_{j=1}^{q}\gamma_j\otimes\gamma_j,
\qquad
\widetilde m_{\Delta,q}\otimes\widetilde m_{\Delta,q}
=
\sum_{j,k=1}^{q}
\widetilde\nu_j\widetilde\nu_k
(\gamma_j\otimes\gamma_k),
\]
it follows that
\[
\mathsf K_{4,q}^{\mathrm{ctr}}
=
c_{0,q}\mathsf I_{H_q}
+
c_{1,q}\widetilde m_{\Delta,q}\otimes\widetilde m_{\Delta,q},
\]
where
\[
c_{0,q}
=
q+2+\rho\|\widetilde m_{\Delta,q}\|_{H_q}^{2},
\qquad
c_{1,q}
=
\rho(q+4)
+\rho(1-3\rho)\|\widetilde m_{\Delta,q}\|_{H_q}^{2}.
\]
\end{proof}

\subsection{Proof of lemma~\ref{lem:diagonal-coefficients}}
\begin{proof}
By the definition of $\widetilde V_{jkmn}^{(q)}$,
\[
\begin{aligned}
\widetilde V_{jjjj}^{(q)}
&=
\sum_{\ell=1}^{2}\pi_\ell
\Bigl\{
\mathbb E(\zeta_{\ell j}^{4})
+
2a_\ell^2\widetilde\nu_j^{\,2}
\mathbb E(\zeta_{\ell j}^{2})
+
a_\ell^4\widetilde\nu_j^{\,4}
\Bigr\} \\
&=
\sum_{\ell=1}^{2}\pi_\ell
\mathbb E
\left[
\left(
\zeta_{\ell j}^{\,2}
+
a_\ell^{2}\widetilde\nu_j^{\,2}
\right)^2
\right].
\end{aligned}
\]
Since
\[
\mathbb E(\zeta_{\ell j}^{2})=1,
\qquad
\mathbb E(\zeta_{\ell j}^{4})=3,
\]
it follows that
\[
\widetilde V_{jjjj}^{(q)}
=
3
+
2
\left(
\sum_{\ell=1}^{2}\pi_\ell a_\ell^2
\right)
\widetilde\nu_j^{\,2}
+
\left(
\sum_{\ell=1}^{2}\pi_\ell a_\ell^4
\right)
\widetilde\nu_j^{\,4}.
\]
Using \eqref{eq1} and \eqref{eq2} gives
\[
\widetilde V_{jjjj}^{(q)}
=
3
+
2\rho\,\widetilde\nu_j^{\,2}
+
\rho(1-3\rho)\widetilde\nu_j^{\,4}.
\]
\end{proof}

\subsection{Proof of theorem~\ref{T1}}
\begin{proof}
Since
$
\beta_q
=
\mathsf K_q^{\dagger/2}\widetilde m_{\Delta,q},
$
its expansion in the Karhunen--Loève basis is
\[
\beta_q
=
\sum_{j=1}^{q}
\frac{\widetilde\nu_j}{\sqrt{\lambda_j}}
\gamma_j.
\]
By construction,
$
\mathsf{\widetilde K}_{4,q}\gamma_j
=
\kappa_{j,q}\gamma_j,
$
where
$
\kappa_{j,q}
=
3
+
2\rho\widetilde\nu_j^{\,2}
+
\rho(1-3\rho)\widetilde\nu_j^{\,4}.
$
Hence
\[
\mathsf{\widetilde K}_{4,q}\beta_q
=
\sum_{j=1}^{q}
\kappa_{j,q}
\frac{\widetilde\nu_j}{\sqrt{\lambda_j}}
\gamma_j.
\]

Suppose first that
$
\beta_q
$
is an eigenfunction of
$
\mathsf{\widetilde K}_{4,q}.
$
Then there exists
$
\kappa_{\Delta,q}\in\mathbb R
$
such that
\[
\mathsf{\widetilde K}_{4,q}\beta_q
=
\kappa_{\Delta,q}\beta_q.
\]
Since
$
\{\gamma_j\}_{j=1}^{q}
$
forms a basis of
$
H_q,
$
comparison of coefficients yields
$
\kappa_{j,q}
=
\kappa_{\Delta,q}
$
for every
$
j
$
such that
$
\widetilde\nu_j\neq0.
$

Define
$
g_\alpha(x)
=
3
+
2\rho x
+
\rho(1-3\rho)x^2,
$
$
x\ge0.
$
Since
$
0<\rho=\alpha(1-\alpha)\le1/4,
$
we have
$
1-3\rho>0
$
and therefore
$
g_\alpha'(x)
=
2\rho
+
2\rho(1-3\rho)x
>
0
$
for every
$
x\ge0.
$
Hence
$
g_\alpha
$
is strictly increasing, so
$
\kappa_{j,q}
=
g_\alpha(\widetilde\nu_j^{\,2})
$
is constant on the active coordinates if and only if there exists
$
c>0
$
such that
$
\widetilde\nu_j^{\,2}
=
c
$
whenever
$
\widetilde\nu_j\neq0.
$

Conversely, if
$
\widetilde\nu_j^{\,2}=c
$
for every active coordinate, then every corresponding basis vector has the common eigenvalue
$
\kappa_{\Delta,q}
=
g_\alpha(c)
=
3
+
2\rho c
+
\rho(1-3\rho)c^2.
$
Consequently,
\[
\mathsf{\widetilde K}_{4,q}\beta_q
=
\sum_{\widetilde\nu_j\neq0}
\kappa_{\Delta,q}
\frac{\widetilde\nu_j}{\sqrt{\lambda_j}}
\gamma_j
=
\kappa_{\Delta,q}\beta_q.
\]

Finally, if
$
\widetilde m_{\Delta,q}
=
\widetilde\nu_{j_\ast}\gamma_{j_\ast}
$
for some
$
\widetilde\nu_{j_\ast}\neq0,
$
then
$
\beta_q
$
is a non-zero multiple of
$
\gamma_{j_\ast},
$
which is an eigenfunction of
$
\mathsf{\widetilde K}_{4,q}.
$
\end{proof}

\subsection{Proof of theorem~\ref{thm:asymptotic-equivalence}}
\begin{proof}
Since
$
\|m_{\Delta,q}\|_{H(\mathfrak K)}^2
=
\sum_{j=1}^{q}\widetilde\nu_j^{\,2},
$
the assumption
$
m_\Delta\notin H(\mathfrak K)
$
is equivalent to
$
\sum_{j=1}^{\infty}\widetilde\nu_j^{\,2}=\infty,
$
and therefore
$
\Delta_q\to\infty
$
because every term defining $\Delta_q$ is nonnegative. Hence
\[
\frac{\mathcal A_q^{(4)}}{\mathcal A_q^{(F)}}
=
\frac{3+\Delta_q}{1+\Delta_q}
=
1+\frac{2}{1+\Delta_q}
\longrightarrow1,
\]
which proves the first statement. The asymptotic equivalence
$
\mathcal A_q^{(4)}
\sim
\mathcal A_q^{(F)}
$
follows immediately.
\end{proof}

\subsection{Proof of theorem~\ref{T2}}
\begin{proof}
By the Feldman--Hájek theorem,
\[
P_1\perp P_2
\iff
m_\Delta\notin H(\mathfrak K).
\]
By the spectral characterization of the Cameron--Martin space,
\[
m_\Delta\notin H(\mathfrak K)
\iff
\sum_{j=1}^{\infty}
\frac{\nu_j^2}{\lambda_j}
=
\sum_{j=1}^{\infty}
\widetilde\nu_j^{\,2}
=
\infty.
\]

By definition,
\[
\Delta_q
=
2\rho
\sum_{j=1}^{q}\widetilde\nu_j^{\,2}
+
\rho(1-3\rho)
\sum_{j=1}^{q}\widetilde\nu_j^{\,4}.
\]
Since
\[
\sum_{j=1}^{\infty}\widetilde\nu_j^{\,4}
\le
\left(
\sum_{j=1}^{\infty}\widetilde\nu_j^{\,2}
\right)^2,
\]
the second series converges whenever the first one does. Since $2\rho>0$, it follows that
\[
\Delta_q\longrightarrow\infty
\iff
\sum_{j=1}^{\infty}\widetilde\nu_j^{\,2}
=
\infty.
\]
Hence,
\[
P_1\perp P_2
\iff
\Delta_q\longrightarrow\infty.
\]
Since
$
\mathcal A_q^{(4)}=3+\Delta_q
$
and
$
\mathcal A_q^{(F)}=1+\Delta_q,
$
we have
\[
\frac{\mathcal A_q^{(4)}}{\mathcal A_q^{(F)}}
=
1+\frac{2}{1+\Delta_q}.
\]
Thus,
\[
\frac{\mathcal A_q^{(4)}}{\mathcal A_q^{(F)}}
\longrightarrow1
\iff
\Delta_q\longrightarrow\infty,
\]
which completes the proof.
\end{proof}

%%%%%%%%%%%%%%%%%%%%%%%%%%%%%%%%%%%%%%%%%%%%%%
%% Support information, if any,             %%
%% should be provided in the                %%
%% Acknowledgements section.                %%
%%%%%%%%%%%%%%%%%%%%%%%%%%%%%%%%%%%%%%%%%%%%%%
%\begin{acks}[Acknowledgments]

%\end{acks}

%%%%%%%%%%%%%%%%%%%%%%%%%%%%%%%%%%%%%%%%%%%%%%
%% Funding information, if any,             %%
%% should be provided in the                %%
%% funding section.                         %%
%%%%%%%%%%%%%%%%%%%%%%%%%%%%%%%%%%%%%%%%%%%%%%

%KEEP THIS
\section*{Statements and Declarations}

\subsection*{Funding}
The author acknowledges the support from the FWO project G013024N.

\subsection*{Competing interests}
The author has no relevant financial or non-financial competing interests to disclose.

\subsection*{Data availability}
No datasets were generated or analyzed in the preparation of this manuscript, and data availability is therefore not applicable.

\end{document}

%% file: preamble.tex
\usepackage[utf8]{inputenc}
\usepackage{geometry}
\usepackage{amsmath}
\usepackage{amssymb}
\usepackage{amsthm}
\usepackage{comment}
\usepackage[dvipsnames]{xcolor}
\usepackage{tikz}
\usetikzlibrary{arrows.meta,graphs,graphs.standard,quotes,shapes.misc}

\usepackage{caption}
\usepackage{blkarray}
\usepackage{multirow}
\usepackage{hhline}
\usepackage{nicematrix}
\usepackage{arydshln}

\reversemarginpar
\usepackage[
  tickmarkheight=.1em,
  textwidth=6em,
  textsize=tiny,
  backgroundcolor=gray!15,
  linecolor=gray!70,
  bordercolor=gray!70,
  textcolor=black
]{todonotes}

\usepackage[pdftex]{hyperref}
\hypersetup{
    colorlinks=true,
    linkcolor={MyBlue},
    citecolor={magenta},
    urlcolor={MyBlue}
}

\newtheorem{defn0}{Definition}[section]
\newtheorem{prop0}[defn0]{Proposition}
\newtheorem{thm0}[defn0]{Theorem}
\newtheorem{assump0}[defn0]{Assumption}
\newtheorem{lemma0}[defn0]{Lemma}
\newtheorem{corollary0}[defn0]{Corollary}
\newtheorem{example0}[defn0]{Example}
\newtheorem{remark0}[defn0]{Remark}
\newtheorem{conjecture0}[defn0]{Conjecture}

\newtheorem*{assumption*}{Assumption}
\newtheorem{question*}{Question}

\newenvironment{theorem}{\medskip \begin{thm0}}{\end{thm0}}

\newenvironment{lemma}{\medskip \begin{lemma0}}{\end{lemma0}}

\newenvironment{remark}{ \medskip\begin{remark0}\rm}{\end{remark0}}

\definecolor{MyBlue}{RGB}{0,101,189} % Pantone 300 (Wie TUMBlau)
\definecolor{MyRed}{RGB}{234, 114, 55} 
\definecolor{MyGreen}{RGB}{162,173,0}
\definecolor{MyYellow}{RGB}{246, 235, 97} %Pantone 100 

\usepackage[nameinlink,capitalise]{cleveref}